\begin{document}

\newtheorem{thm}{Theorem}[section]
\newtheorem{cor}[thm]{Corollary}
\newtheorem{conj}[thm]{Conjecture}
\newtheorem{lem}[thm]{Lemma}
\newtheorem{defi}[thm]{Definition}
\newtheorem{prop}[thm]{Proposition}
\newtheorem{opg}[thm]{Opgave}
\newtheorem{rem}[thm]{Remark}
\newtheorem{ex}[thm]{Example}
\newtheorem{cond}[thm]{Condition}
\newtheorem*{pro}{Problem}

\newenvironment{Proof}{\begin{trivlist}

\item[\hskip\labelsep{\it Proof.}]}{$\hfill\Box$\end{trivlist}}

%%%%%% different sets of numbers
\newcommand{\NN}{\mathbb{N}}
\newcommand{\ZZ}{\mathbb{Z}}
\newcommand{\RR}{\mathbb{R}}
\newcommand{\EE}{\mathbb{E}}
\newcommand{\PP}{\mathbb{P}}
\newcommand{\Ss}{\mathbb{S}}
\newcommand{\MM}{\mathbb{M}}

\newcommand{\BB}{\mathbb{B}}

\newcommand{\Pset}{\mathcal{P}}
\newcommand{\Gset}{\mathcal{G}}
\newcommand{\Sset}{\mathcal{S}}

\title{On the Discrepancy of Jittered Sampling}

\author{Florian Pausinger}
\address{IST Austria, Am Campus 1, 3400 Klosterneuburg, Austria}
%\email{florian.pausinger@ist.ac.at}

\author{Stefan Steinerberger}
\address{Department of Mathematics, Yale University, 10 Hillhouse Avenue, New Haven, CT 06511, USA}
%\email{stefan.steinerberger@yale.edu}
\begin{abstract} We study the discrepancy of jittered sampling sets: such a set $\mathcal{P} \subset [0,1]^d$ is generated for fixed $m \in \mathbb{N}$ by partitioning
$[0,1]^d$ into $m^d$ axis aligned cubes of equal measure and placing a random point inside each of the $N = m^d$ cubes. We prove that,
for $N$ sufficiently large,
 $$ \frac{1}{10}\frac{d}{N^{\frac{1}{2} + \frac{1}{2d}}} \leq \mathbb{E} D_N^*(\mathcal{P})  \leq  \frac{\sqrt{d} (\log{N})^{\frac{1}{2}}}{N^{\frac{1}{2} + \frac{1}{2d}}},$$
where the upper bound with an unspecified constant $C_d$ was proven earlier by Beck.
Our proof makes crucial use of the sharp Dvoretzky-Kiefer-Wolfowitz inequality and a suitably taylored Bernstein inequality; we have reasons to believe
that the upper bound has the sharp scaling in $N$. Additional heuristics suggest that jittered sampling should be able to improve
 known bounds on the inverse of the star-discrepancy in the regime $N \gtrsim d^d$. We also prove a \textit{partition principle} showing that every partition of $[0,1]^d$ combined with a jittered sampling construction gives rise to a set whose expected squared $L^2-$discrepancy is smaller than that of purely random points.
\end{abstract}
\maketitle

%%%%%%%%%%%%%%%%%%%%%%%%%%%%%%%%%%%%%%%%%%%%%%%%%%%
%%%%%%%%%%%%%%%%%%%%%%%%%%%%%%%%%%%%%%%%%%%%%%%%%%%
%%%%%%%%%%%%%%%%%%%%%%%%%%%%%%%%%%%%%%%%%%%%%%%%%%%

\section{Introduction}

\subsection{Introduction.} Given a set $X = \left\{x_1, \dots, x_N\right\}$ of $N$ points in $[0,1]^d$ we define, as usual, the star discrepancy
$$D_N^* (X) = \sup_{R \subset[0,1]^d}{ \left| \frac{ \# \left\{i: x_i \in R \right\}}{N} - |R| \right|},$$
where the supremum ranges over all rectangles having all sides parallel to the axes and anchored in the origin. It is easy to see that a regular grid of $N$ points has discrepancy $D_N^* \sim N^{-1/d}$. In contrast, if we take $X$ to be a
collection of $N$ independently and uniformly distributed random variables, then it is known (see \cite{dorr,hnww}) that $D_N^* \sim N^{-1/2}$ with
some nonzero probability.
%\fire{und warum "some nonzero probability"? - das ist ja ne zufallsvariable; ich weiss nicht, ob man die aussage auch fuer $\mathbb{E}$ hat, deswegen bin ich da in der formulierung lieber vorsichtig: nonzero probability = es gibt}
Jittered sampling combines the best of both worlds by partitioning $[0,1]^d$ into $m^d$ axis aligned cubes of equal measure and placing a random point inside each of the $N = m^d$ cubes; see Figure \ref{fig:def}.

\begin{center}
\begin{figure}[h!]
\centering
\begin{tikzpicture}[scale=0.65]
\draw[step=1cm,gray,very thin] (0,0) grid (5,5);
\draw[step=1cm,gray,very thin] (8,0) grid (13,5);
%left point set
\node at (0,0) {$\bullet$}; 
\node at (0,1) {$\bullet$}; 
\node at (0,2) {$\bullet$}; 
\node at (0,3) {$\bullet$}; 
\node at (0,4) {$\bullet$}; 

\node at (1,0) {$\bullet$}; 
\node at (1,1) {$\bullet$}; 
\node at (1,2) {$\bullet$}; 
\node at (1,3) {$\bullet$}; 
\node at (1,4) {$\bullet$}; 

\node at (2,0) {$\bullet$}; 
\node at (2,1) {$\bullet$}; 
\node at (2,2) {$\bullet$}; 
\node at (2,3) {$\bullet$}; 
\node at (2,4) {$\bullet$}; 

\node at (3,0) {$\bullet$}; 
\node at (3,1) {$\bullet$}; 
\node at (3,2) {$\bullet$}; 
\node at (3,3) {$\bullet$}; 
\node at (3,4) {$\bullet$}; 

\node at (4,0) {$\bullet$}; 
\node at (4,1) {$\bullet$}; 
\node at (4,2) {$\bullet$}; 
\node at (4,3) {$\bullet$}; 
\node at (4,4) {$\bullet$}; 

%right point set
\node at (8.31,0.23) {$\bullet$}; 
\node at (8.7,1.5) {$\bullet$}; 
\node at (8.2,2.7) {$\bullet$}; 
\node at (8.3,3.34) {$\bullet$}; 
\node at (8.9,4.29) {$\bullet$}; 

\node at (9.3,0.74) {$\bullet$}; 
\node at (9.55,1.56) {$\bullet$}; 
\node at (9.1,2.83) {$\bullet$}; 
\node at (9.65,3.42) {$\bullet$}; 
\node at (9.45,4.43) {$\bullet$}; 

\node at (10.34,0.41) {$\bullet$}; 
\node at (10.67,1.66) {$\bullet$}; 
\node at (10.52,2.42) {$\bullet$}; 
\node at (10.67,3.37) {$\bullet$}; 
\node at (10.52,4.12) {$\bullet$}; 

\node at (11.5,0.82) {$\bullet$}; 
\node at (11.7,1.58) {$\bullet$}; 
\node at (11.3,2.21) {$\bullet$}; 
\node at (11.9,3.14) {$\bullet$}; 
\node at (11.7,4.72) {$\bullet$}; 

\node at (12.23,0.63) {$\bullet$}; 
\node at (12.56,1.61) {$\bullet$}; 
\node at (12.89,2.25) {$\bullet$}; 
\node at (12.17,3.57) {$\bullet$}; 
\node at (12.67,4.19) {$\bullet$}; 
\end{tikzpicture}
\caption{The regular grid and a point set obtained by jittered sampling.} \label{fig:def}
\end{figure}
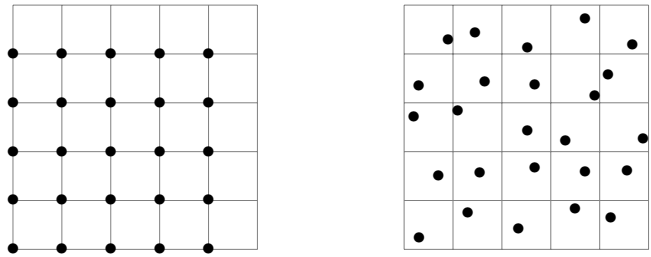
\end{center}
\vspace{-15pt}
This idea seems to have first been explored in 1981 by Bellhouse \cite{bellhouse}, where he shows 'that a stratified sampling design,
although less convenient to implement than systematic sampling, is usually more efficient.'  The point sets make a reappearance 
in computer graphics in a 1984 paper of Cook, Porter \& Carpenter \cite{port} by the name of \textit{jittered sampling}. The bound
$$\mathbb{E} D_N^*(\mathcal{P})  \leq C_d \frac{(\log{N})^{\frac{1}{2}}}{N^{\frac{1}{2} + \frac{1}{2d}}}$$
was proven by Beck \cite{beck87} (see also the book Beck \& Chen \cite{beckchen} or the exposition in Chazelle \cite{chazelle}).
Beck actually derives the result for the more general notion of discrepancy w.r.t. to rotations of a convex set; the proof does not give any
information about the constant or how it grows as a function of the dimension. 
There exist deterministic lower bounds valid for
all sets of points (see Chen \& Travaglini \cite{chen}).
We are exclusively concerned with the behavior of random points.

\subsection{Main result.} We restrict ourselves to the notion of star-discrepancy, where our
contribution is a proof yielding explicit control on the constant as a function of dimension. We hope that our paper will spark interest in jittered sampling
as a possible avenue towards improved bounds for the inverse of the star-discrepancy.

\begin{thm} \label{main} For a random set $\mathcal{P} \subset [0,1]^d$ with $N = m^d$ points obtained from jittered sampling, we have, for $N$ sufficiently large depending on $d$,
$$ \frac{1}{10} \frac{d}{N^{\frac{1}{2} + \frac{1}{2d}}} \leq \mathbb{E} D_N^*(\mathcal{P})  \leq  \frac{\sqrt{d} (\log{N})^{\frac{1}{2}}}{N^{\frac{1}{2} + \frac{1}{2d}}}.$$
\end{thm}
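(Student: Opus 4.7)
The plan is to work directly with the boundary-cell decomposition of the discrepancy. Fix a rectangle $R = [0,a_1] \times \cdots \times [0,a_d]$ with $a_j = (k_j + t_j)/m$, $t_j \in [0,1)$. Every cell of the jittered partition is either fully contained in $R$, fully outside $R$, or a ``boundary cell'' (at least one coordinate $i_j$ equals $k_j$). The contribution of each contained cell to $N_R$ is deterministic, so
$$\mathrm{disc}(R) := \frac{N_R}{N} - |R| = \frac{1}{N} \sum_{C \text{ boundary}} \bigl(X_C - \mathbb{E} X_C\bigr),$$
a sum of independent, mean-zero random variables bounded by $1/N$. A direct count shows that the number of boundary cells is at most $m^d - (m-1)^d \lesssim d m^{d-1} = d N^{1-1/d}$, while the sum of per-cell variances is at most $N$ times the volume of the boundary region, which gives $\mathrm{Var}(\mathrm{disc}(R)) \lesssim d / N^{1+1/d}$.

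\textbf{Upper bound.} With these variance and boundedness estimates, Bernstein's inequality for a single $R$ yields a sub-Gaussian tail
$$\Pr\bigl(|\mathrm{disc}(R)| > \lambda\bigr) \le 2\exp\!\bigl(-c N^{1+1/d} \lambda^2 / d\bigr)$$
in the relevant regime. To pass to the star-discrepancy I would first snap $R$ to a net: every anchored box is sandwiched between two boxes whose sides are fractions of the form $k/m$, and these ``grid-aligned'' boxes differ from $R$ by volume $\lesssim d/m$, whereas the number of points in the corresponding sandwich shells can be controlled by the 1D Dvoretzky--Kiefer--Wolfowitz bound applied coordinate by coordinate (this is what keeps the combinatorial loss polynomial in $N$ rather than exponential in $d$, and it is the role of the sharp DKW constant). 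A union bound over $N^{O(d)}$ grid-aligned rectangles then gives $\Pr(D_N^* > \lambda) \le N^{O(d)} \exp(-cN^{1+1/d}\lambda^2/d)$; choosing $\lambda \sim \sqrt{d \log N}/N^{1/2+1/(2d)}$ makes the tail summable and integrating yields the stated upper bound.

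\textbf{Lower bound.} I would start with the explicit test rectangle $R_* = [0, 1-1/(2m)]^d$. A direct computation, separating boundary cells by the number $j$ of coordinates equal to $m-1$, shows
$$\mathrm{Var}(\mathrm{disc}(R_*)) = N^{-2}\bigl[(m-\tfrac12)^d - (m-\tfrac34)^d\bigr] \asymp d/N^{1+1/d}.$$
Hence $\mathrm{disc}(R_*)$ is a sum of $\asymp dN^{1-1/d}$ independent bounded variables, and a Berry--Esseen / Paley--Zygmund argument shows $\mathbb{E}|\mathrm{disc}(R_*)| \gtrsim \sqrt{d}/N^{1/2+1/(2d)}$. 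The remaining factor $\sqrt{d}$ to reach the claimed lower bound is the main obstacle: one needs to exploit the supremum. The plan is to consider the family of perturbed rectangles $R_{\vec k} = \prod_j [0,(k_j + \tfrac12)/m]$ where each $k_j \in \{m/2, \dots, m-1\}$ varies independently; these are $\asymp m^d$ rectangles whose discrepancies depend on essentially disjoint boundary cells when the $k_j$ are chosen far apart, so a Sudakov-style packing lower bound yields $\mathbb{E}\sup_{\vec k} |\mathrm{disc}(R_{\vec k})| \gtrsim \sqrt{\log(e^d)} \cdot \sqrt{d}/N^{1/2+1/(2d)} \asymp d/N^{1/2+1/(2d)}$. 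Bookkeeping the constant down to $1/10$ will require some care with the Berry--Esseen approximation since $\mathrm{disc}(R_*)$ is only approximately Gaussian.
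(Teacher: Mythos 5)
Your upper bound plan is recognizably the same strategy as the paper's --- decompose into boundary cells, get a variance bound of order $d/N^{1+1/d}$, apply a Bernstein-type concentration bound, and union-bound --- but the paper organizes the union bound differently. Rather than snapping to a net and union-bounding over grid-aligned boxes, the paper fixes the cube $[\textbf{x}] = \textbf{k}$ and decomposes the anchored box $[0,\textbf{x}]$ into $2^d$ pieces $A_L(\textbf{x})$, $L \in \{0,1\}^d$. The point is that for $|L|=1$ the supremum over all $\textbf{x}$ in the fixed cube of the $A_L$-contribution is, after projection, \emph{exactly} the one-dimensional Kolmogorov--Smirnov statistic, so the Dvoretzky--Kiefer--Wolfowitz inequality controls the sup inside the cube with no net loss at all, and the Bernstein argument is then applied to the sum of $d$ independent suprema (Lemmas 3.1--3.2). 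The residual union bound is therefore over only $N$ cubes, not over $(m+1)^d$ grid boxes; note in passing that your "$N^{O(d)}$ grid-aligned rectangles" overcounts --- there are only $(m+1)^d \le 2^d N$ of them --- and with $N^{O(d)}$ your $\lambda$ would be off by an extra $\sqrt{d}$.

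The lower bound is where there is a genuine gap. Your plan rests on a Sudakov-type packing bound for $\sup_{\vec k} |\mathrm{disc}(R_{\vec k})|$, which requires a Gaussian (or at least provably Gaussian-approximable) process. The variables $\mathrm{disc}(R_{\vec k})$ are sums of bounded independent increments, but a sub-Gaussian \emph{upper} tail (which is what Bernstein gives) does not imply a matching Gaussian \emph{lower} tail, and without such anticoncentration the Sudakov step has no basis. Section 6 of the paper discusses precisely this obstacle and explicitly flags it as open: the missing ingredient is an "inverse DKW inequality" (or a KMT-type strong approximation) that would show the slice discrepancies have Gaussian-comparable lower tails; the authors were unable to make it work. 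The paper's actual lower bound sidesteps this entirely. Fix $\textbf{k}=(m,\dots,m)$; the $|L|=1$ slice discrepancies $g_1(x_1),\dots,g_d(x_d)$ depend on disjoint coordinates and disjoint collections of random points, hence are independent, and each $g_j$ vanishes at the lower corner. Therefore $\sup_{\textbf{x}} |\mathrm{disc}| \gtrsim \max\bigl(\sum_j \max g_j,\ -\sum_j \min g_j\bigr) \ge \tfrac{1}{2}\sum_j \sup_{x_j}|g_j(x_j)|$, and the exact Kolmogorov asymptotic $\sqrt{n}\,\mathbb{E}X_n \to \sqrt{\pi/2}\log 2 \approx 0.87$ gives $\mathbb{E}\sup|g_j| \gtrsim 0.87/N^{1/2+1/(2d)}$ per slice, so the full factor $d$ (with constant $0.87/2 > 1/10$) comes out of a single cube with no anticoncentration and no Berry--Esseen bookkeeping. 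Your route gets only $\sqrt{d}$ from a single box and then needs the (unavailable) Sudakov step to recover the other $\sqrt{d}$; the paper's route exploits the product structure of the slice sup to avoid the issue altogether.
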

For $d=1$ it is easy to see that $D_N^*(\mathcal{P}) \sim N^{-1}$. However, we have reason to believe that for $d \geq 2$ the upper bound could have the correct order as $N \rightarrow \infty$ (up to a
constant depending on $d$). These
considerations are detailed at the end of the paper where we comment on a possible approach to improving the lower bound and carry out corresponding
back-of-the-envelope estimates. 
We believe our main contribution is  
\begin{enumerate}
\item to show that jittered sampling is amenable to being analyzed at a very fine level;
\item to show that jittered sampling might be a natural way to establish improved bounds on the inverse star discrepancy (previously suggested by Aistleitner \cite{aisti}); and, in that spirit,
\item to ask whether the upper bound really requires $N$ to be large (we believe not).
\end{enumerate}
It seems natural to expect $\mathbb{E} D_N^*(\mathcal{P})$ to \textit{always} (not just for $N$ sufficiently large) be smaller than the average star discrepancy of the same number of purely random points (some numerical 
experiments for $N$ small can be found in Section 6). We have been unable to prove that for $D_N^*$ but were able to show it at a much greater level of generality for the $\mathcal{L}^2-$discrepancy 
(see below).

\subsection{Relationship with prior results.} 
Let us compare our result with the celebrated result of Heinrich, Novak, Wasilkowski \& Wozniakowski \cite{hnww} who showed the existence of 
a set of $N$ points in $[0,1]^d$ with
$$ D_N^*(\mathcal{P}) \leq c \sqrt{\frac{d}{N}} \qquad \mbox{for some universal constant}~c.$$
Aistleitner \cite{aisti}, using a result of Gnewuch \cite{gnew}, has shown that one can take $c=10$ and Doerr \cite{dorr} has shown this to be the correct order of magnitude for a set $\mathcal{P}$ of $N$ points chosen independently and uniformly at random from $[0,1]^d$, i.e.
$\mathbb{E}  D_N^*(\mathcal{P}) \gtrsim \sqrt{d/N}$. 
Furthermore, we refer to papers of Gnewuch \cite{gne} and Hinrichs \cite{hin} (see also \cite{sst}) as well as the recent comprehensive monographs of
Novak \& Wozniakowski \cite{nov1, nov2, nov3}. If our upper bound were to hold unconditionally for all $N$ (possibly with a different absolute constant $c$ in front), then this would improve the known estimate (see \cite{hnww}) 
$$ \mbox{disc}^{*}(N,d) \leq c \sqrt{\frac{d}{N}} \qquad \mbox{to} \qquad  \mbox{disc}^{*}(N,d) \leq c \sqrt{\frac{d}{N} \min\left\{1, \frac{\log{N}}{N^{\frac{1}{d}}} \right\}   },$$
where 
$$\mbox{disc}^{*}(N,d):= \underset{\mathcal{P}}\inf \ D_N^*(\mathcal{P}).$$
%with $\mathcal{S}$ being a pointset in $[0,1]^d$.
This new bound improves on the old one if $N$ is slightly bigger than $d^d$ ('slightly bigger' being understood on a logarithmic scale). Even though $N \sim d^d$ is fairly large, we do not know of any better constructions in that regime (Section 6 contains a heuristic
comparison with upper bounds on the discrepancy of Hammersley point sets).
There exists a natural reason why one would expect jittered sampling to
suddenly gain in effectiveness around $N \sim d^d$, this is also detailed in Section 6. Section 6 furthermore contains a heuristic argument suggesting
$$ \mathbb{E} D_N^*(\mathcal{P})  \gtrsim  \frac{d + (\log{N})^{\frac{1}{2}}}{N^{\frac{1}{2} + \frac{1}{2d}}}.$$

\subsection{A general partition principle.} The advantage of jittered sampling is that, by construction, the point set avoids a certain type of clustering. The purpose of this section is to show that the same underlying principle might hold at a greater level of generality and to establish it for the $\mathcal{L}_2-$discrepancy in particular. Formally, consider a partition of the unit cube into $N$ Lebesgue-measurable sets of equal measure
$$ [0,1]^d = \bigcup_{i=1}^{N}{\Omega_i} \qquad \mbox{such that} \quad \forall~1 \leq i \leq N: |\Omega_i| = \frac{1}{N}.$$
We pose no other geometric conditions on the partition, in particular the $\Omega_i$ need not be connected. 
Any such partition gives rise to a $N-$element `jittered-sampling' set $\mathcal{P}_{\Omega}$ adapted to the partition by picking a random point (random w.r.t. the Lebesgue measure) from each $\Omega_i$.
\begin{center}
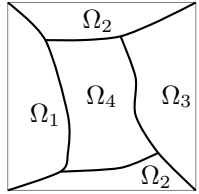
\begin{figure}[h!]
\centering
\begin{tikzpicture}[scale=0.5]
\draw[gray,very thin] (0,0) -- (5,0);
\draw[gray,very thin] (5,5) -- (5,0);
\draw[gray,very thin] (5,5) -- (0,5);
\draw[gray,very thin] (0,0) -- (0,5);
\draw[thick] plot [smooth,tension=0.5] coordinates{(0,0) (1.4,0.5) (1.6, 1) (1.6, 2) (1,4) (0,5) };
\node (A) at (1,2)  {$\Omega_1$};
\node (A) at (2.4,4.5)  {$\Omega_2$};
\draw[thick] plot [smooth,tension=0.5] coordinates{(1,4) (2,4) (3, 4.1) (4, 4.5) (5, 5) };
\draw[thick] plot [smooth,tension=0.5] coordinates{(3, 4.1) (3.4, 3) (3.4, 2) (4,1) (5,0) };
\node (A) at (4.5,2.5)  {$\Omega_3$};
\draw[thick] plot [smooth,tension=0.5] coordinates{ (1.4,0.5) (3,0.6) (4,1) };
\node (A) at (3.95,0.35)  {$\Omega_2$};
\node (A) at (2.5,2.5)  {$\Omega_4$};
\end{tikzpicture}
\caption{A partition of $[0,1]^2$ into 4 sets of equal measure ($\Omega_2$ is disconnected).}
\end{figure}
\end{center}
\vspace{-20pt}
We believe that such point sets $\mathcal{P}_{\Omega}$ are on average more regular than sets comprised of $N$ purely random points $\mathcal{P}_N$ \textit{independently of the partition $\Omega$} in a very general sense. We prove it in one special case: recall that the $\mathcal{L}^2-$disrepancy of a set of points $A \subset [0,1]^d$ is defined as
the $L^2-$norm of the discrepancy function and can be written as
$$\mathcal{L}_2(A) := \left( \int_{[0,1]^d}{ \left| \frac{ \#A \cap [0, \textbf{x}]}{\#A} -  \left|  [0,\textbf{x}]\right|   \right|^2 d\textbf{x}}\right)^{\frac{1}{2}}.$$
\begin{thm}[Partition principle] Given any partition of $[0,1]^d$ into $N$ sets of equal measure
$$ [0,1]^d = \bigcup_{i=1}^{N}{\Omega_i} \qquad \mbox{such that} \quad \forall~1 \leq i \leq N: |\Omega_i| = \frac{1}{N},$$
the jittered sampling construction $\mathcal{P}_{\Omega}$ associated to $\Omega$ satisfies
$$ \mathbb{E}~ \mathcal{L}_2(\mathcal{P}_{\Omega})^2 \leq  \mathbb{E} ~\mathcal{L}_2(\mathcal{P}_N)^2.$$ 
\end{thm}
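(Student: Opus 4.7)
The plan is to write the squared $\mathcal{L}^2$-discrepancy as an integral of a sum over point pairs, take expectation, and compare term by term with the purely random case. The single ingredient we need is independence of the points across different cells $\Omega_i$ together with the fact that the relevant test function integrates to zero over the full cube.

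Concretely, write $G(\mathbf{x},y) := \mathbf{1}_{y \in [0,\mathbf{x}]} - |[0,\mathbf{x}]|$ so that, for any $N$-point set $A = \{x_1,\dots,x_N\}$,
\begin{equation*}
\mathcal{L}_2(A)^2 = \frac{1}{N^2}\int_{[0,1]^d}\sum_{i,j=1}^N G(\mathbf{x},x_i)G(\mathbf{x},x_j)\, d\mathbf{x}.
\end{equation*}
For jittered sampling on a partition $\Omega=\{\Omega_i\}$, the points $x_i$ are independent and $x_i$ is uniform on $\Omega_i$; set $h_i(\mathbf{x}) := \mathbb{E}\,G(\mathbf{x},x_i) = N\int_{\Omega_i} G(\mathbf{x},y)\,dy$. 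The crucial observation is that
\begin{equation*}
\sum_{i=1}^N h_i(\mathbf{x}) = N\int_{[0,1]^d} G(\mathbf{x},y)\, dy = 0,
\end{equation*}
since $G(\mathbf{x},\cdot)$ has mean zero for every fixed $\mathbf{x}$.

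Splitting into diagonal and off-diagonal contributions, the diagonal terms give
\begin{equation*}
\frac{1}{N^2}\sum_{i=1}^N \mathbb{E}\,G(\mathbf{x},x_i)^2 = \frac{1}{N^2}\sum_{i=1}^N N\int_{\Omega_i} G(\mathbf{x},y)^2\, dy = \frac{1}{N}\int_{[0,1]^d} G(\mathbf{x},y)^2\, dy,
\end{equation*}
which is exactly the quantity that appears when one computes $\mathbb{E}\,\mathcal{L}_2(\mathcal{P}_N)^2$ (there all cross terms vanish by independence and mean-zero). The off-diagonal terms, by independence across cells and the zero-sum identity above, collapse to
\begin{equation*}
\frac{1}{N^2}\int \sum_{i\neq j} h_i(\mathbf{x})h_j(\mathbf{x})\, d\mathbf{x} = \frac{1}{N^2}\int\Bigl[\bigl(\textstyle\sum_i h_i\bigr)^2 - \sum_i h_i^2\Bigr] d\mathbf{x} = -\frac{1}{N^2}\int \sum_{i=1}^N h_i(\mathbf{x})^2\, d\mathbf{x}.
\end{equation*}

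Subtracting the two expectations therefore yields
\begin{equation*}
\mathbb{E}\,\mathcal{L}_2(\mathcal{P}_\Omega)^2 - \mathbb{E}\,\mathcal{L}_2(\mathcal{P}_N)^2 = -\frac{1}{N^2}\int_{[0,1]^d}\sum_{i=1}^N h_i(\mathbf{x})^2\, d\mathbf{x} \leq 0,
\end{equation*}
which is the desired inequality. There is no real obstacle; the only thing to check carefully is that for iid uniform points all off-diagonal expectations vanish and the diagonal term matches, and that the assumption $|\Omega_i|=1/N$ is what makes the density $N\mathbf{1}_{\Omega_i}$ a probability density so that the computation of $h_i$ is consistent. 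As a bonus, the identity shows equality holds if and only if $h_i \equiv 0$ for all $i$ and almost every $\mathbf{x}$, so any nontrivial partition gives a strict improvement.
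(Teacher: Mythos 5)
Your proof is correct and is essentially the same as the paper's: both interchange integration over $\omega$ and $\mathbf{x}$, then for fixed $\mathbf{x}$ compute a variance using the independence of the points across cells, and conclude by comparing pointwise with the iid case. The only difference is bookkeeping: you center via $G(\mathbf{x},y)=\mathbf{1}_{y\in[0,\mathbf{x}]}-|[0,\mathbf{x}]|$ and obtain the exact identity $\mathbb{E}\,\mathcal{L}_2(\mathcal{P}_\Omega)^2-\mathbb{E}\,\mathcal{L}_2(\mathcal{P}_N)^2=-\frac{1}{N^2}\int\sum_i h_i(\mathbf{x})^2\,d\mathbf{x}$, whereas the paper expresses the variance through Bernoulli summands and then applies Cauchy--Schwarz; these are algebraically equivalent (Cauchy--Schwarz for the vector $(|\Omega_i\cap[0,\mathbf{x}]|)_i$ is exactly the nonnegativity of $\sum_i h_i^2$), though your exact identity makes the equality characterization immediate, which the paper needs separately for the sharpness discussion.
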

The result is sharp: for every $N \in \mathbb{N}$ it is possible to construct a sequence of partitions $\Omega_1, \Omega_2, \dots$ such that
$$ \lim_{k \rightarrow \infty}{\mathbb{E}~ \mathcal{L}_2(\mathcal{P}_{\Omega_k})^2} =  \mathbb{E} ~\mathcal{L}_2(\mathcal{P}_N)^2.$$ 
This can be done by taking finer and finer grids and assigning the little cubes randomly to one of the $N$ partitions. The law of large numbers guarantees that each partition element gets
a proportion of $(1+o(1))/N$ of all possible cubes; a simple computation will then yield the result. This is not at all surprising since that construction emulates the purely random points $\mathcal{P}_{N}$ 
and discrepancy is continuous.
\subsection{Open questions.}
\textbf{Jittered sampling.} The most important question seems to be whether it is possible to precisely determine the asymptotic behavior of the discrepancy of jittered sampling. Our proof 
gives a slightly stronger result and allows us to conclude that 
$$ \mbox{for every}~ C > \sqrt{\frac{3}{4} + \frac{1}{4d}} \quad \mbox{we have} \quad \mathbb{P}\left( D_N^*(\mathcal{P}) \geq   C\frac{\sqrt{d} (\log{N})^{\frac{1}{2}}}{N^{\frac{1}{2} + \frac{1}{2d}}}\right) \leq \frac{1}{N^{C^2}} $$
for $N$ sufficiently large (depending on $d,C$).
However, this merely improves the constant in the main result, not the asymptotic order. Is it true that
$$ \mathbb{E} D_N^*(\mathcal{P})  \gtrsim  \frac{d + (\log{N})^{\frac{1}{2}}}{N^{\frac{1}{2} + \frac{1}{2d}}} \qquad \mbox{or maybe even} \qquad  \mathbb{E} D_N^*(\mathcal{P})  \sim  \frac{d + (\log{N})^{\frac{1}{2}}}{N^{\frac{1}{2} + \frac{1}{2d}}}?$$

\textbf{Partition principle.} We believe the partition principle to have some importance: in particular, if one tries to do numerical integration with a fixed number $N$ of points, then using jittered sampling with respect to \textit{any} partition into $N$ sets of equal measure is guaranteed to give a smaller value for $\mathcal{L}_2(\mathcal{P}_{\Omega})^2$ on average. In contrast to classical jittered sampling using a grid (where we have to require $N = m^d$) there is no restriction on $N$ nor is there any restriction on the partition. 

\begin{quote}
\textbf{Question.} Is there an analogue of Theorem 1.2. for other (say, $\mathcal{L}_p$) discrepancies?
\end{quote}

It is clear that not all partitions will be equally effective and it is also tempting to believe that jittered sampling might just be the best way to minimize $D_N^*$; we actually believe that this could perhaps not be the case: maybe it's advantageous to break the symmetry and make the cubes to the origin a little bit bigger (which introduces a 'systematic' error close to the origin but a small one); the advantage being that (slightly) larger cubes close to the origin eat up more space and allow for the remaining cubes to be slightly smaller and thus reduce the error in $[0, \textbf{x}]$ for $\textbf{x}$ close to $(1,1, \dots , 1)$. This motivates the following question

\begin{quote}
\textbf{Question.} Which partition minimizes the $D_N^*$-discrepancy on average? Is it possible to show that jittered sampling is not optimal?
\end{quote}

 At least the second part of the question could perhaps be addressed by an explicit numerical investigation (for example by considering an explicit partition of $[0,1]^2$ into $N=36$ parts and comparing it with jittered sampling). If we were ask the same question for the $D_N-$discrepancy, i.e. the quantity
$$D_N (X) = \sup_{R \subset[0,1]^d}{ \left| \frac{ \# \left\{i: x_i \in R \right\}}{N} - |R| \right|},$$
where $R$ now ranges over \textit{all} axis-parallel rectangles (not just the ones anchored in the origin), then it would seem conceivable that jittered sampling could give the best results.

\subsection{Organization of the paper.} We start by giving a proof of Theorem \ref{main} in two dimensions in Section 2. The argument introduces the Dvoretzky-Kiefer-Wolfowitz inequality
as well as the crucial geometric decomposition and should be helpful in understanding the higher-dimensional argument. Section 3 gives a precise definition of the geometric decomposition and contains our adapted Bernstein inequality. Section 4  and Section 5 give a proof of the upper and lower bound, respectively, and Section 6 contains additional remarks and comments. Section 7 gives a proof of the partition principle.

%%%%%%%%%%%%%%%%%%%%%%%%%%%%%%%%%%%%%%%%%%%%%%%%%%%
%%%%%%%%%%%%%%%%%%%%%%%%%%%%%%%%%%%%%%%%%%%%%%%%%%%
%%%%%%%%%%%%%%%%%%%%%%%%%%%%%%%%%%%%%%%%%%%%%%%%%%%

\section{The proof in two dimensions}

We start by giving a simplified proof of the two-dimensional case. This case is substantially easier than the higher-dimensional case 
but it conveys the main structure of the argument. We will not yet employ the adapted Bernstein inequality, hence the constant in the result is actually larger (2 instead of $\sqrt{2}$) than what is achieved
by the main result.

\begin{proof} We denote the discrepancy function by
$$ f(x,y) := \left| \frac{\# \left\{ p \in \mathcal{P} \cap [0,x] \times [0,y]\right\}}{|\mathcal{P}|} - xy\right|.$$ Following the definition of our point sets, we see that computing the discrepancy function reduces to understanding two strips, see Figure \ref{fig:strips}, because
for all cubes fully contained inside $[0,\textbf{x}]$, for $\textbf{x}=(x,y)$, the contribution to the discrepancy is 0 by construction. The idea
of arranging points in such a way that the computation of discrepancy reduces to sets other than full hyperrectangles
is certainly not new and started the classical theory of $(t,s)-$sequences (see Niederreiter \cite{ni}).
\begin{center}
\begin{figure}[h!]
\centering
\begin{tikzpicture}[scale=1]
%\draw[step=1cm,gray,very thin] (0,0) grid (5,5);
\fill[gray!20!white] (12.67,0) -- (12.67,4) -- (12,4)--(12,0);
\fill[gray!20!white] (8,4) -- (12,4) -- (12,4.19)--(8,4.19);
\draw[step=1cm,gray,very thin] (8,0) grid (13,5);
\draw[dashed,thick] (12.67,0) -- (12.67,4.19) -- (8,4.19);
\node at (8.31,0.23) {$\bullet$}; 
\node at (8.7,1.5) {$\bullet$}; 
\node at (8.2,2.7) {$\bullet$}; 
\node at (8.3,3.34) {$\bullet$}; 
\node at (8.9,4.29) {$\bullet$}; 
\node at (9.3,0.74) {$\bullet$}; 
\node at (9.55,1.56) {$\bullet$}; 
\node at (9.1,2.83) {$\bullet$}; 
\node at (9.65,3.42) {$\bullet$}; 
\node at (9.45,4.43) {$\bullet$}; 
\node at (10.34,0.41) {$\bullet$}; 
\node at (10.67,1.66) {$\bullet$}; 
\node at (10.52,2.42) {$\bullet$}; 
\node at (10.67,3.37) {$\bullet$}; 
\node at (10.52,4.12) {$\bullet$}; 
\node at (11.5,0.82) {$\bullet$}; 
\node at (11.7,1.58) {$\bullet$}; 
\node at (11.3,2.21) {$\bullet$}; 
\node at (11.9,3.14) {$\bullet$}; 
\node at (11.7,4.72) {$\bullet$}; 
\node at (12.23,0.63) {$\bullet$}; 
\node at (12.56,1.61) {$\bullet$}; 
\node at (12.89,2.25) {$\bullet$}; 
\node at (12.17,3.57) {$\bullet$}; 
\node at (12.43,4.89) {$\bullet$}; 
\end{tikzpicture}
\caption{Strips of interest for a particular point $(x,y)$.} \label{fig:strips}
\end{figure}
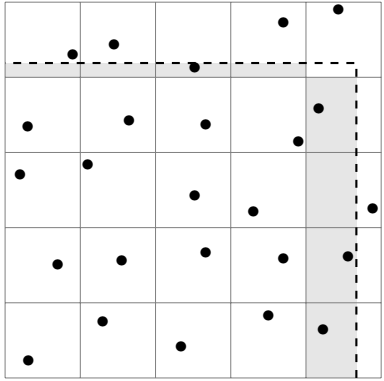
\end{center}

We are interested in the pair $(x,y)$ yielding the largest absolute value, i.e. the discrepancy of the point set. This can be rewritten as
$$ \label{discF}
\sup_{0 \leq x,y \leq 1} | f(x,y) | = \max_{1 \leq i,j \leq m }{ \sup_{\frac{i-1}{m} \leq x \leq \frac{i}{m} \atop
\frac{j-1}{m} \leq y \leq \frac{j}{m}}}{| f(x,y) | }.
$$
Consider $i,j$ to be fixed from now on. 
As Figure \ref{fig:strips} shows, it suffices to compute discrepancy arising from the two strips and the strips are almost independent from each other (and become fully independent after removing one point). We introduce two local discrepancy functions (associated to the two strips). Note that $x,y$ are again numbers between $0$ and $1$ now parametrizing the short side of the strip.
\begin{align*}
 f_1(i,j,y)=f_1(y) &= \left| \frac{\# \left\{1\leq k \leq N: 0 \leq x_k \leq \frac{i-1}{m} \wedge \frac{j-1}{m} \leq y_k \leq  \frac{j-1 + y}{m} \right\}}{|\mathcal{P}|} - \frac{i-1}{m^2}y\right|, \\
 f_2(i,j,x)=f_2(x) &= \left| \frac{\# \left\{1\leq k \leq N: \frac{i-1}{m} \leq x_k \leq \frac{i-1+x}{m} \wedge 0 \leq y_k \leq  \frac{j-1}{m} \right\}}{|\mathcal{P}|} - \frac{j-1}{m^2}x\right|.
\end{align*}
%\fire{$f_1,f_2$ introduced as functions of ONE variable; but below they are used as two variable functions. why don't we just define $f_1=f_1(i,y)$ and $f_2=f_2(x,j)$ in case the $m$ is clear? this would also safe space below.}
First of all, we note that by construction there is precisely one point contained in 
$[\frac{i-1}{m}, \frac{i}{m}] \times [\frac{j-1}{m} , \frac{j}{m}]$,
such that
$$ \left| \sup_{\frac{i-1}{m} \leq x \leq \frac{i}{m} \atop
\frac{j-1}{m} \leq y \leq \frac{j}{m}} | f(x,y) |   - \sup_{0 \leq y \leq 1} | f_1(y) |   - \sup_{0 \leq x \leq 1} | f_2(x) | \right| \leq \frac{1}{N}.$$

For fixed $i,j$ the functions $f_1, f_2$ measure the maximal deviation from the (uniform) limiting distribution (or, put differently, the discrepancy of the
projection of the point sets within each strip). 
%Recall the Dvoretzky-Kiefer-Wolfowitz inequality \fire{Maybe this is already a good spot to put the reference [7] as done in 3.3.1}: if $z_1, z_2, \dots, z_k$ are independently and uniformly distributed random variables on $[0,1]$, then
%$$ \mathbb{P}\left( \sup_{0 \leq z \leq 1}{\left| \frac{\# \left\{1 \leq \ell \leq k: 0 \leq z_{\ell} \leq z\right\}}{k} -z \right|} > \varepsilon \right) \leq 2e^{-2k\varepsilon^2}.$$
Recall the \emph{Dvoretzky-Kiefer-Wolfowitz
inequality} \cite{dv} (with the sharp constant due to Massart \cite{mam}): if $z_1, z_2, \dots, z_k$ are independently and uniformly distributed random variables in $[0,1]$, then
$$ \mathbb{P}\left( \sup_{0 \leq z \leq 1}{\left| \frac{\# \left\{1 \leq \ell \leq k: 0 \leq z_{\ell} \leq z\right\}}{k} -z \right|} > \varepsilon \right) \leq 2e^{-2k\varepsilon^2}.$$

The result is purely one-dimensional and sharp (the tail bound corresponds to the first term of the Kolmogorov distribution).
We use this inequality to bound $f_1$ and $f_2$. Note that the computation of $f_1$ corresponds to $k = (i-1)$ and then rescaling the domain by a factor of $(i-1)/m^2$. 
%\fire{do you maybe mean rescaling by a factor $(i-1)/m$? otherwise it is not clear to me where the $(l-1)$ comes in below.}
Therefore
$$ \mathbb{P}\left( \sup_{0 \leq y \leq 1} | f_1(y) | >  \frac{i-1}{m^2}\varepsilon\right) \leq 2e^{-2(i-1)\varepsilon^2}$$
or, by setting $\delta = ((i-1)/m^2) \varepsilon$ and using $i \leq m$
$$ \mathbb{P}\left( \sup_{0 \leq y \leq 1} | f_1(y) | >  \delta \right) \leq 2e^{-2(i-1)\frac{\delta^2 m^4}{(i-1)^2}} \leq  2e^{-2\delta^2 m^3}.$$
Setting 
$$ \delta = 2\frac{(\log{N})^{1/2}}{N^{\frac{3}{4}}},$$
we get that 
$$ \mathbb{P}\left( \sup_{0 \leq y \leq 1} | f_1(y) | >  \frac{\delta}{2} \right) \leq \frac{2}{N^{2}}.$$
The same inequality holds, by symmetry, for $f_2$. Suppose now that all $f_1, f_2$ are bounded from above by
$\delta/2$ for all $i,j$. Then, by construction,
$$ D_N(\mathcal{P}) \leq \delta + \frac{1}{N}.$$
There are $2N$ strips: using the union bound, we see that the probability of one of them being bigger than $\delta/2$ is bounded from above by
$4N^{-1}$ and therefore
$$ \mathbb{E} D_N(\mathcal{P}) \leq \frac{2(\log{N})^{1/2}}{N^{\frac{3}{4}}} + \frac{4}{N^{}} + \frac{1}{N}.$$
\end{proof}

%%%%%%%%%%%%%%%%%%%%%%%%%%%%%%%%%%%%%%%%%%%%%%%%%%%
%%%%%%%%%%%%%%%%%%%%%%%%%%%%%%%%%%%%%%%%%%%%%%%%%%%
%%%%%%%%%%%%%%%%%%%%%%%%%%%%%%%%%%%%%%%%%%%%%%%%%%%

\section{Tools for the general case}

The proof of the multi-dimensional case follows the same ideas as the proof of the two-dimensional case, where we were able to decompose things into two essentially independent
strips. The main complication is that the $d-$dimensional case requires a decomposition into $2^d$ cases, all of which need to be accounted for (and $d$ of those continue to play
the role of strips and yield the main contribution). We always assume that $N = m^d$ for some $m \in \mathbb{N}$ and keep $m$ fixed throughout the argument. For ease of notation, we now define 
$\left[ \cdot \right]:\mathbb{R} \rightarrow \mathbb{R}$ to be
$$ \left[x \right] := \left\lfloor mx  + 1 \right\rfloor.$$
Furthermore, applied to a vector $\textbf{x} \in [0,1]^d$, we want $\left[\textbf{x}\right]$ to act component-wise. $\left[\textbf{x}\right]$ should be thought of as giving the coordinates of the tiny
cube with side length $m^{-1} = N^{-1/d}$ to which $\textbf{x}$ is associated; see Figure \ref{fig:cubes}. Note that 
$$\left[ \cdot \right] : [0,1]^d \rightarrow \left \{ 1, \dots, m\right \}^d$$ 
except for a set of measure 0 (the boundary), which we ignore.

\begin{center}
\begin{figure}[h!]
\centering
\begin{tikzpicture}[scale=1]
\draw[step=1cm,gray,very thin] (8,0) grid (11,3);
\node at (8.5,0.5) {$(1,1)$}; 
\node at (8.5,1.5) {$(1,2)$}; 
\node at (8.5,2.5) {$(1,3)$}; 
\node at (9.5,0.5) {$(2,1)$}; 
\node at (9.5,1.5) {$(2,2)$}; 
\node at (9.5,2.5) {$(2,3)$}; 
\node at (10.5,0.5) {$(3,1)$}; 
\node at (10.5,1.5) {$(3,2)$}; 
\node at (10.5,2.5) {$(3,3)$}; 
\end{tikzpicture} 
\caption{$\left[ \cdot \right]$ enumerating entries of matrices.} 
\label{fig:cubes}
\end{figure}
\end{center}
\subsection{Decomposition. }  Let $\mathcal{P}$ be a point set obtained by jittered sampling. We have to find a way of analyzing

$$ \sup_{\left[\textbf{x}\right] = \textbf{k}}{\left| \frac{\# \left\{\textbf{p} \in \mathcal{P}: \textbf{p} \leq \textbf{x} \right\}}{|\mathcal{P}|} - \prod_{i=1}^{d}x_i\right|},$$
where $\textbf{k} \in  \left\{1, \dots, m\right\}^d$ is a fixed string of coordinates fixing one of the small cubes, $\leq$ is to be understood component-wise and
$\textbf{x} = (x_1, \dots, x_d).$ Consider now $\textbf{k}$ fixed: we ignore the one point $\textbf{p} \in \mathcal{P}$ for which $\left[\textbf{p}\right] = \textbf{k}$ (thereby
increasing the discrepancy by at most $N^{-1}$). In the two-dimensional case, the problem decoupled into 4 different elements: the big rectangle contributing nothing
to discrepancy, the one point in small square itself that could be ignored and the two strips. In higher
dimensions, the situation is not quite as simple but a similar argument can be applied: for every $\textbf{x}$, we construct a decomposition of
$ \left\{\textbf{p} \in \mathcal{P}: \textbf{p} \leq \textbf{x} \right\}$ into $2^d$ sets
$$ \left\{\textbf{p} \in \mathcal{P}: \textbf{p} \leq \textbf{x} \right\} = \bigcup_{j=1}^{2^d}{  \left\{\textbf{p} \in \mathcal{P}: \textbf{p} \in A_j(\textbf{x}) \right\}},$$
where the $A_j(\textbf{x})$ are a partition of the hyperrectangle $[0, \textbf{x}]$.
We describe the sets $A_j(\textbf{x})$ explicitly and identify $j$ with an element from $\left\{0, 1 \right\}^d$ (i.e. a string of 0/1 of length $d$):
we have $\textbf{z} \in A_j(\textbf{x})$ if $\textbf{z} \leq \textbf{x}$ and if additionally $\left[z_i \right] = \left[x_i \right]$ for all $1 \leq i \leq d$ where the string $j$ has a 1 at position $i$ and
$\left[z_i \right] < \left[x_i \right]$ whenever the string $j$ has a 0 at that position. Put differently, for a string $L \in \left\{0,1\right\}^d$,
$$A_L(\textbf{x}) := \left\{\textbf{z} \in [0,\textbf{x}]: \forall \ 1 \leq i \leq d: \left[\textbf{z}_i\right] =  \left[\textbf{x}_i\right] \ \ \text{iff} \ \ L_i=1. \right\}.$$
It is clear that this induces a partition of $\left\{ \textbf{z} : \textbf{z} \leq \textbf{x} \right\}$: for every $\textbf{z}$ we can simply check for each coordinate whether $\left[\textbf{z}_i\right] = \left[\textbf{x}_i\right]$
and write down a 1 if this is the case or a 0 if not, thus obtaining the corresponding vector $L$. 
We use $|L|$ to denote the sum of all components and state some simple properties of this decomposition. An elementary counting argument shows
$$ \#(\mathcal{P} \cap A_L(\textbf{x}) ) \leq m^{d-|L|}.$$
This follows immediately from the more precise 
bound
$$  \#(\mathcal{P} \cap A_L(\textbf{x})) \leq \prod_{j=1}^{d}{ \begin{cases} \textbf[x_i] \qquad &\mbox{if} \quad L_j = 0 \\ 1 \qquad &\mbox{otherwise.} \end{cases}}$$
The biggest set is therefore $L = (0,0,\dots,0)$, which corresponds to the union of all cubes fully contained inside $[0, \textbf{x}]$ (which, by construction, has discrepancy 0). 
As we will see in the proof, the main contribution comes from the $d$ sets indexed by $(1, 0, \dots, 0),(0, 1, 0 \dots, 0), \dots ,(0,0, \dots, 0,1)$.

\subsection{A projection agument.} Let now $\textbf{k}$ be fixed and assume furthermore that $L$ is fixed. The purpose of this section is to point out that the computation of
$$\sup_{\left[\textbf{x}\right] = \textbf{k}}{\left|  \frac{\# \left\{\textbf{p} \in A_L(\textbf{x}): \textbf{p} \leq \textbf{x} \right\} }{|\mathcal{P}|} - |A_L(\textbf{x})|  \right|}$$
is precisely the classical problem of estimating the discrepancy of random variables. The set $A_L(\textbf{x})$ fixes $|L|$ entries
of $\left[ \textbf{x} \right]$ and requires that the remaining $d-|L|$ entries do not exceed $\left[ \textbf{x} \right]$.
%%% 
\begin{center}
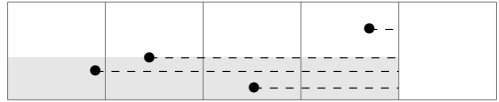
\begin{figure}[h!]
\centering
\begin{tikzpicture}[scale=1.3]
\fill[gray!20!white] (8,4) -- (12,4) -- (12,4.43)--(8,4.43);
\draw[step=1cm,gray,very thin] (8,4) grid (13,5);

\node at (8.9,4.29) {$\bullet$}; 
\node at (9.45,4.43) {$\bullet$}; 
\node at (10.52,4.12) {$\bullet$}; 
\node at (11.7,4.72) {$\bullet$}; 
\draw [thin, dashed] (8.9,4.29) -- (12,4.29);
\draw [thin, dashed] (9.45,4.43) -- (12,4.43);
\draw [thin, dashed] (10.52,4.12) -- (12,4.12);
\draw [thin, dashed] (11.7,4.72) -- (12,4.72);
\end{tikzpicture}
\caption{The worst strip is obtained from the point maximizing the star discrepancy of the projection.} \label{fig:stripe3}
\end{figure}
\end{center}
\vspace{-20pt}
By construction of jittered sampling it follows therefore that whether or not a
point $\textbf{p} \in \mathcal{P}$ is contained in $A_L(\textbf{x})$ depends only on the $|L|$ fixed coordinates.
We define a projection 
$$ \pi:  \left\{\textbf{p} \in A_L(\textbf{x}): \textbf{p} \leq \textbf{x}\right\} \rightarrow [0,N^{-\frac{1}{d}}]^{|L|}$$
by projecting onto the fixed coordinates given by $L$; see Figure \ref{fig:stripe3}. Maximizing the quantity 
$$\sup_{\left[\textbf{x}\right] = \textbf{k}}{\left|  \frac{\# \left\{\textbf{p} \in A_L(\textbf{x}): \textbf{p} \leq \textbf{x} \right\} }{|\mathcal{P}|} - |A_L(\textbf{x})|  \right|}$$
over all $\textbf{x}$ satisfying $\left[\textbf{x}\right] = \textbf{k}$ is therefore akin to determining the discrepancy of a fixed number of random points in $[0,N^{-1/d}]^{|L|}$.
The actual number of random points in question (being at most $N^{d-|L|}$) as well as the volume $[0,N^{-1/d}]^{|L|}$ both decrease as $|L|$ increase, therefore -- as we shall show -- the
case $|L| = 1$ is the most interesting one.

%%%%%%%%%%%%%%%%%%%%%%%%%%%%%%%%%%%%%%%%%%%%%%%%%%%

\subsection{An adapted Bernstein inequality.} Our proof requires the use of a Bernstein inequality; the classical Bernstein inequality assumes that a certain random variable is
compactly supported. We are dealing with a situation where the random variable is compactly supported but is with very high likelihood contained in an interval much smaller than the support. 
We therefore return to the original derivation of the Bernstein inequality and exploit that it
merely uses the existence of exponential moments -- however, using the sharp Dvoretzky-Kiefer-Wolfowitz inequality, it is possible to get upper bounds on the exponential moments.
This is contained in the following lemma.
\begin{lem} \label{expmom} Let $z_1, \dots, z_{n}$ be i.i.d. uniformly distributed random variables on $[0,1]$. For every $t > 0$
$$ \mathbb{E} \exp\left(t \sup_{0 \leq z \leq 1}{\left| \frac{\# \left\{1 \leq \ell \leq n: 0 \leq z_{\ell} \leq z\right\}}{n} -z \right|} \right) \leq 1 + \sqrt{2\pi}\frac{t}{\sqrt{n}}\exp\left(\frac{t^2}{8n}\right).$$
\end{lem}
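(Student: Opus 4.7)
The plan is to reduce the exponential moment to a Gaussian-type integral via the layer cake representation, and then apply the sharp Dvoretzky-Kiefer-Wolfowitz (DKW) inequality with Massart's constant to the tail. Let
$$X := \sup_{0 \leq z \leq 1}\left| \frac{\#\{1 \leq \ell \leq n : 0 \leq z_\ell \leq z\}}{n} - z \right|$$
denote the Kolmogorov-Smirnov statistic. I would first write
$$\mathbb{E} e^{tX} = \int_0^\infty \mathbb{P}(e^{tX} > s)\,ds = 1 + t\int_0^\infty \mathbb{P}(X > u)\, e^{tu}\,du,$$
using that $X \geq 0$ (the change of variables $s = e^{tu}$ splits the integral into the contribution from $u < 0$, which equals $1$, and the contribution from $u \geq 0$).

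Next, I would apply the sharp DKW inequality $\mathbb{P}(X > u) \leq 2e^{-2nu^2}$ to obtain
$$\mathbb{E} e^{tX} \leq 1 + 2t \int_0^\infty e^{-2nu^2 + tu}\,du.$$
Completing the square in the exponent gives
$$-2nu^2 + tu = -2n\left(u - \frac{t}{4n}\right)^2 + \frac{t^2}{8n},$$
so that
$$\mathbb{E} e^{tX} \leq 1 + 2t \exp\!\left(\frac{t^2}{8n}\right) \int_0^\infty e^{-2n(u - t/(4n))^2}\,du.$$

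Finally, I would bound the remaining integral by extending the domain of integration to all of $\mathbb{R}$ (which is legal since the integrand is nonnegative), yielding the Gaussian integral
$$\int_{-\infty}^{\infty} e^{-2n(u - t/(4n))^2}\,du = \sqrt{\frac{\pi}{2n}}.$$
Combining everything gives
$$\mathbb{E} e^{tX} \leq 1 + 2t \sqrt{\frac{\pi}{2n}} \exp\!\left(\frac{t^2}{8n}\right) = 1 + \sqrt{2\pi}\,\frac{t}{\sqrt{n}} \exp\!\left(\frac{t^2}{8n}\right),$$
as claimed.

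There is no serious obstacle here; the structure of the bound on the right-hand side essentially dictates the approach. The only place where one might hope to do better is in replacing $\mathbb{P}(X > u) \leq 2e^{-2nu^2}$ by $\min(1, 2e^{-2nu^2})$, but the clean closed form of the stated bound already absorbs the inefficiency of extending the Gaussian integral from $[0, \infty)$ to $\mathbb{R}$, so the simple version of the argument gives precisely the advertised constants.
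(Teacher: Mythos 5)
Your proposal is correct and follows essentially the same route as the paper: the layer-cake identity $\mathbb{E}\,e^{tX} = 1 + t\int_0^\infty \mathbb{P}(X>u)e^{tu}\,du$ is just the integration-by-parts step in the paper's proof written in probabilistic language, and the remaining steps (apply Massart's sharp DKW bound, complete the square, extend the Gaussian integral to all of $\mathbb{R}$) coincide exactly. The constants come out identically, so this is the paper's argument.
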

\begin{proof} We rewrite the expectation
$$ I = \int_{0}^{\infty}{e^{t x} \left(-\frac{d}{dx} 
 \mathbb{P}\left( \sup_{0 \leq z \leq 1}{\left| \frac{\# \left\{1 \leq \ell \leq n: 0 \leq z_{\ell} \leq z\right\}}{n} -z \right|} > x \right)\right) dx}$$
 using integration by parts (which is allowed thanks to the superexponential decay guaranteed by the
Dvoretzky-Kiefer-Wolfowitz inequality)
\begin{align*}
I &= 1 + \int_{0}^{\infty}{\left(\frac{d}{dx} e^{t x} \right)  
 \mathbb{P}\left( \sup_{0 \leq z \leq 1}{\left| \frac{\# \left\{1 \leq \ell \leq n: 0 \leq z_{\ell} \leq z\right\}}{n} -z \right|} > x \right) dx}  \\
&\leq 1+  2t \int_{0}^{\infty}{e^{t x -2n x^2} dx} \leq  1+  2t e^{\frac{t^2}{8n}} \int_{0}^{\infty}{e^{-2n\left(x - \frac{t}{4n} \right)^2} dx} \leq 1 +  \sqrt{2\pi} \frac{t}{\sqrt{n}} e^{\frac{t^2}{8n}}.
\end{align*}
\end{proof}
Let $X$ now denote an arbitrary $\mathbb{R}-$valued random variable satisfying
$$  \mathbb{E} \exp\left(tX\right) \leq 1 + \sqrt{2\pi}\frac{t}{\sqrt{n}}\exp\left(\frac{t^2}{8n}\right)$$
and consider the sum of $d$ independent copies $S_d=X_1 + \dots + X_d$. To keep our manuscript self-contained, we will now repeat the usual way the Bernstein inequality
is derived from a bound on the exponential moment.
\begin{lem} \label{bernhow} We have
$$ \mathbb{P}\left(S_d \geq y\right) \leq  \left(1+\frac{\sqrt{32\pi n}}{d}y\right)^d \exp\left(-\frac{2n y^2}{d}\right).$$
\end{lem}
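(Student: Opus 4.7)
The plan is to carry out a standard Chernoff/Laplace-transform argument, picking up the (non-standard) factor $(1+\sqrt{32\pi n}\,y/d)^{d}$ from the additive correction in the exponential-moment bound of Lemma \ref{expmom}. I would begin by writing, for any $t>0$,
$$\mathbb{P}(S_d\ge y)\;\le\; e^{-ty}\,\mathbb{E}\,e^{tS_d}\;=\;e^{-ty}\left(\mathbb{E}\,e^{tX}\right)^{d}\;\le\;e^{-ty}\left(1+\sqrt{2\pi}\,\frac{t}{\sqrt{n}}\,\exp\!\left(\frac{t^2}{8n}\right)\right)^{\!d},$$
using Markov's inequality, the independence of $X_1,\dots,X_d$, and the hypothesis on $\mathbb{E}\,e^{tX}$.

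The next move is to separate the inner exponential factor $\exp(t^2/(8n))$ from the rest so that it can be absorbed into the $e^{-ty}$ prefactor. For this I would invoke the elementary inequality $1+ae^{b}\le e^{b}(1+a)$, valid for $a\ge 0$ and $b\ge 0$ (since $e^{-b}\le 1$), applied with $a=\sqrt{2\pi}\,t/\sqrt{n}$ and $b=t^{2}/(8n)$. This yields
$$\left(1+\sqrt{2\pi}\,\frac{t}{\sqrt{n}}\,e^{t^{2}/(8n)}\right)^{\!d}\;\le\;e^{\,dt^{2}/(8n)}\left(1+\sqrt{2\pi}\,\frac{t}{\sqrt{n}}\right)^{\!d},$$
and therefore
$$\mathbb{P}(S_d\ge y)\;\le\;\left(1+\sqrt{2\pi}\,\frac{t}{\sqrt{n}}\right)^{\!d}\exp\!\left(\frac{dt^{2}}{8n}-ty\right).$$

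Finally I would optimize the exponent $dt^{2}/(8n)-ty$ in $t$; this is minimized at $t=4ny/d$, producing the value $-2ny^{2}/d$. Substituting this choice also gives $\sqrt{2\pi}\,t/\sqrt{n}=\sqrt{32\pi n}\,y/d$, so the prefactor becomes exactly $(1+\sqrt{32\pi n}\,y/d)^{d}$, and we arrive at the claimed bound
$$\mathbb{P}(S_d\ge y)\;\le\;\left(1+\frac{\sqrt{32\pi n}}{d}y\right)^{\!d}\exp\!\left(-\frac{2ny^{2}}{d}\right).$$

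There is no real obstacle: the only non-routine ingredient is the trick $1+ae^{b}\le e^{b}(1+a)$, which is what allows the non-standard additive structure of the moment bound in Lemma \ref{expmom} to be processed cleanly without spoiling the Gaussian decay. The choice $t=4ny/d$ is forced by optimizing only the Gaussian part of the exponent, which is legitimate because the polynomial factor $(1+\sqrt{2\pi}\,t/\sqrt{n})^{d}$ has been pulled outside.
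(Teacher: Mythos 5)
Your proof is correct and follows essentially the same route as the paper's: Chernoff bound via Markov, factor by independence, apply the exponential-moment bound of Lemma \ref{expmom}, then set $t = 4ny/d$. The only difference is that you make explicit the small inequality $1 + ae^{b}\le e^{b}(1+a)$ that is used (silently) to pass from $\bigl(1+\sqrt{2\pi}\,(t/\sqrt{n})\,e^{t^2/(8n)}\bigr)^d$ to $\bigl(1+\sqrt{2\pi}\,t/\sqrt{n}\bigr)^d e^{dt^2/(8n)}$, and you note that $t=4ny/d$ is the minimizer of the Gaussian part of the exponent, whereas the paper simply asserts this choice.
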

\begin{proof} Clearly, for any $t > 0$ we have that whenever
$$ S_d \geq y \qquad \mbox{then} \qquad e^{t(S_d - y)} \geq 1$$
and therefore
$$ \mathbb{P}\left(S_d \geq y\right)  = \mathbb{E}~1_{S_d \geq y} \leq \mathbb{E}~e^{t(S_d - y)}.$$
It remains to exploit that $S_d$ is the sum of $d$ independent copies and therefore, for every $t > 0$
\begin{align*}
\mathbb{P}\left(S_d \geq y\right) &\leq  \mathbb{E}~e^{t(S_d - y)} \leq  e^{-ty} \mathbb{E}~e^{t S_d} = e^{-t y} \prod_{j=1}^{d}{\mathbb{E} e^{t X_j}} \leq e^{-t y} \left(1 + \sqrt{2\pi}\frac{t}{\sqrt{n}}\right)^d\exp\left(\frac{t^2d}{8n}\right).
\end{align*}

Setting $t = (4ny)/d$ implies the result.
\end{proof}

\subsection{Bounds on the discrepancy of random variables.} The final ingredient is a bound on the probability of the discrepancy of $N$ independently and uniformly distributed 
random points in $[0,1]^d$
exceeding a certain limit due to Heinrich, Novak, Wasilkowski \& Wozniakowski \cite{hnww}. It states that 
$$ \mathbb{P}(D_N^* \geq 2\delta) \leq 2\left(\frac{d}{\delta} + 2\right)^d \exp\left(-\delta^2 N/2\right).$$

%%%%%%%%%%%%%%%%%%%%%%%%%%%%%%%%%%%%%%%%%%%%%%%%%%%
%%%%%%%%%%%%%%%%%%%%%%%%%%%%%%%%%%%%%%%%%%%%%%%%%%%
%%%%%%%%%%%%%%%%%%%%%%%%%%%%%%%%%%%%%%%%%%%%%%%%%%%

\section{Proof of the main statement: upper bound} 

\begin{proof} We start with a simple reduction: by definition, a complicated way of writing discrepancy is
$$ D_N^*(\mathcal{P}) = \sup_{\textbf{k} \in \left\{1, \dots, m \right\}^d}{  \sup_{\left[\textbf{x}\right] = \textbf{k}}{\left| \frac{\# \left\{\textbf{p} \in \mathcal{P}: \textbf{p} \leq \textbf{x} \right\}}{|\mathcal{P}|} - \prod_{i=1}^{d}x_i\right|} }.$$
We fix $\textbf{k} = (m,m, \dots, m)$ and carry out the argument for this special case to avoid additional notation; it is easy to see that this is the worst case for all our bounds (all of which remain true
for any other value of $\textbf{k}$; see also our proof for $d=2$ where everything is explicit). This is not surprising at all: a larger number of random points increases the likelihood for large deviations.
We use the triangle inequality and get
\begin{align*}
\sup_{\left[\textbf{x}\right] = \textbf{k}}{\left| \frac{\# \left\{\textbf{p} \in \mathcal{P}: \textbf{p} \leq \textbf{x} \right\}}{|\mathcal{P}|} - \prod_{i=1}^{d}x_i\right|} &=
 \sup_{\left[\textbf{x}\right] = \textbf{k}}{\left| \sum_{L \in \left\{0,1\right\}^d}{ \frac{\# \left\{\textbf{p} \in A_L(\textbf{x}): \textbf{p} \leq \textbf{x} \wedge \left[\textbf{p}\right] \neq \textbf{k} \right\} }{|\mathcal{P}|} - |A_L(\textbf{x})|} \right| }\\
&\leq
 \sup_{\left[\textbf{x}\right] = \textbf{k}}{\left| \sum_{L \in \left\{0,1\right\}^d \atop |L| =0}{ \frac{\# \left\{\textbf{p} \in A_L(\textbf{x}): \textbf{p} \leq \textbf{x} \wedge \left[\textbf{p}\right] \neq \textbf{k} \right\} }{|\mathcal{P}|} - |A_L(\textbf{x})|} \right| } \\
&+
 \sup_{\left[\textbf{x}\right] = \textbf{k}}{\left| \sum_{L \in \left\{0,1\right\}^d \atop |L| =1}{ \frac{\# \left\{\textbf{p} \in A_L(\textbf{x}): \textbf{p} \leq \textbf{x} \wedge \left[\textbf{p}\right] \neq \textbf{k} \right\} }{|\mathcal{P}|} - |A_L(\textbf{x})|} \right| } \\
&+
 \sup_{\left[\textbf{x}\right] = \textbf{k}}{\sum_{L \in \left\{0,1\right\}^d \atop |L| \geq 2}{\left|  \frac{\# \left\{\textbf{p} \in A_L(\textbf{x}): \textbf{p} \leq \textbf{x} \wedge \left[\textbf{p}\right] \neq \textbf{k} \right\} }{|\mathcal{P}|} - |A_L(\textbf{x})| \right|} }.
\end{align*}

The first expression, $|L| = 0$, is 0 by construction: it corresponds to a set of measure
 $$(1-N^{-\frac{1}{d}})^d \quad \mbox{while containing} \quad (N^{\frac{1}{d}} - 1)^{d} \quad \mbox{out of $N$ points.}$$

\subsection{The main contribution.} Now we analyze the case
$$  \sup_{\left[\textbf{x}\right] = \textbf{k}}{\left| \sum_{L \in \left\{0,1\right\}^d \atop |L| =1}{ \frac{\# \left\{\textbf{p} \in A_L(\textbf{x}): \textbf{p} \leq \textbf{x} \wedge \left[\textbf{p}\right] \neq \textbf{k} \right\} }{|\mathcal{P}|} - |A_L(\textbf{x})|} \right| },$$
which ultimately produce the main contribution. The sum extends over $d$ independent random variables; each single random variable corresponds precisely to the quantity estimated
in the Dvoretzky-Kiefer-Wolfowitz inequality with
$$ \#~ \mbox{of points} = (N^{\frac{1}{d}} - 1)^{d-1}.$$
First we use the projection argument to realize that we can identify our quantity of interest with a simpler probabilistic object: let $z_1, \dots, z_{n}$ be i.i.d. uniformly distributed random variables on $[0,1]$. Then
the following two random variables coincide
\begin{align*}
\sup_{\left[\textbf{x}\right] = \textbf{k}}{\left|  \frac{\# \left\{\textbf{p} \in A_L(\textbf{x}): \textbf{p} \leq \textbf{x} \wedge \left[\textbf{p}\right] \neq \textbf{k} \right\} }{|\mathcal{P}|} - |A_L(\textbf{x})|  \right|}
=
 \frac{1}{N^{\frac{1}{d}}}\sup_{0 \leq z \leq 1}{\left| \frac{\# \left\{1 \leq \ell \leq (N^{\frac{1}{d}}-1)^{d-1} : 0 \leq z_{\ell} \leq z\right\}}{n} -z \right|}  \end{align*}
Plugging this into Lemma \ref{expmom}, we get
$$ \mathbb{E} \exp\left(t \sup_{\left[\textbf{x}\right] = \textbf{k}}{\left|  \frac{\# \left\{\textbf{p} \in A_L(\textbf{x}): \textbf{p} \leq \textbf{x} \wedge \left[\textbf{p}\right] \neq \textbf{k} \right\} }{|\mathcal{P}|} - |A_L(\textbf{x})|  \right|} \right) \leq 1 + \sqrt{2\pi}\frac{tN^{-\frac{1}{d}}}{ (N^{\frac{1}{d}}-1)^{\frac{d-1}{2}}  } \exp\left( \frac{t^2 N^{-\frac{2}{d}}}{8 (N^{\frac{1}{d}}-1)^{d-1}   }\right).$$
The next few steps requires algebraic manipulation of that expression. This can be done in all the standard ways but we consider it instructive to keep the focus on the precise exponents. We shall
therefore introduce $\varepsilon_0 > 0$ and assume $N$ to be so big that
$$  (N^{\frac{1}{d}}-1)^{d-1}  \geq (1-\varepsilon_0)N^{\frac{d-1}{d}}.$$
In the end we will see that a small enough $\varepsilon_0>0$ (and thus a sufficiently large $N$) will suffice. Trivially,
\begin{align*}   \mathbb{E} \exp\left(t \sup_{\left[\textbf{x}\right] = \textbf{k}}{\left|  \frac{\# \left\{\textbf{p} \in A_L(\textbf{x}): \textbf{p} \leq \textbf{x} \wedge \left[\textbf{p}\right] \neq \textbf{k} \right\} }{|\mathcal{P}|} - |A_L(\textbf{x})|  \right|} \right) 
&\leq 1 + \sqrt{2\pi}\frac{tN^{-\frac{1}{d}}}{ (N^{\frac{1}{d}}-1)^{\frac{d-1}{2}}  } \exp\left( \frac{t^2 N^{-\frac{2}{d}}}{8 (N^{\frac{1}{d}}-1)^{d-1}   }\right) \\
&\leq 1 +  \frac{\sqrt{2\pi}}{\sqrt{1-\varepsilon_0}}\frac{t}{ N^{\frac{d+1}{2d}} } \exp\left( \frac{t^2 }{8(1-\varepsilon_0) N^{\frac{d+1}{d}}   }\right) \\
&\leq \left( 1 +  \frac{\sqrt{2\pi}}{\sqrt{1-\varepsilon_0}}\frac{t}{ N^{\frac{d+1}{2d}} } \right)\exp\left( \frac{t^2 }{8(1-\varepsilon_0)  N^{\frac{d+1}{d}}   }\right) 
\end{align*}
Next, we consider the sum of $d$ i.i.d. random variables 
$$ S_d =  \sum_{L \in \left\{0,1\right\}^d \atop |L| =1}{ \sup_{\left[\textbf{x}\right] = \textbf{k}} \left| \frac{\# \left\{\textbf{p} \in A_L(\textbf{x}): \textbf{p} \leq \textbf{x} \wedge \left[\textbf{p}\right] \neq \textbf{k} \right\} }{|\mathcal{P}|} - |A_L(\textbf{x})|\right|},$$
where each one satisfies the bound on the exponential moment outlined above. Using the standard derivation of Bernstein inequalities directly as in the proof of Lemma \ref{bernhow}, we obtain by setting the free
parameter in the argument to be
$$ t = \frac{4(1-\varepsilon_0)y N^{\frac{d+1}{d}}}{d}$$
the inequality
$$ \mathbb{P}(S_d \geq y) \leq \left(1 + \sqrt{2\pi}\sqrt{1-\varepsilon_0}  \frac{4N^{\frac{1}{2} + \frac{1}{2d}}y}{d}\right)^d\exp \left(-2(1-\varepsilon_0) \frac{N^{1+\frac{1}{d}} y^2}{d}\right).$$
In particular, this implies
$$ \mathbb{P}\left(S_d \geq \frac{C\sqrt{d \log{N}}}{N^{\frac{1}{2} + \frac{1}{2d}}}\right) \leq  \left( 1 + \sqrt{2\pi}4\sqrt{1-\varepsilon_0} C \sqrt{\frac{\log{N}}{d}} \right)^d
\exp \left(-2(1-\varepsilon_0) C^2 \log{N}\right).$$
However, since we are interested in bounding the expectation of the discrepancy, we need to somehow incorporate cases
where one of the cubes violates that condition: in that case we simply assume that the discrepancy assumes maximal value 1. Altogether,

$$ \mathbb{E} D_N^*(\mathcal{P}) \leq \frac{C\sqrt{d \log{N}}}{N^{\frac{1}{2} + \frac{1}{2d}}} + N  \mathbb{P}\left(S_d \geq \frac{C\sqrt{d \log{N}}}{N^{\frac{1}{2} + \frac{1}{2d}}}\right) .$$
It remains to see whether we can make the second term smaller than the first one. A simple computation shows that this requires
$$ 2(1-\varepsilon_0) C^2 \geq \frac{3}{2} + \frac{1}{2d} \qquad \mbox{or, equivalently,} \qquad C > \sqrt{\frac{3}{4} + \frac{1}{4d}}$$
as $\varepsilon_0$ can be made arbitrarily small.
This shows that the way we formulated Theorem \ref{main} throws away a small gain. We could actually show that
$$\mathbb{E} D_N^*(\mathcal{P})  \leq  \left(\sqrt{\frac{3}{4} + \frac{1}{4d}} + \varepsilon\right) \frac{\sqrt{d \log{N} }}{N^{\frac{1}{2} + \frac{1}{2d}}}$$
for every $\varepsilon > 0$ and $N$ sufficiently large depending on $\varepsilon$. However, we chose to omit that formulation for the sake of brevity and
because it yields merely an improvement of the constant.

\subsection{The remaining contributions.}
It remains to study the case $|L| \geq 2$ and show that the additional contribution is small with high likelihood. Let now $2 \leq \ell \leq d$ be arbitrary and consider
$$  \sum_{L \in \left\{0,1\right\}^d \atop |L| = \ell}{\sup_{\left[\textbf{x}\right] = \textbf{k}}{ \left|  \frac{\# \left\{\textbf{p} \in A_L(\textbf{x}): \textbf{p} \leq \textbf{x} \wedge \left[\textbf{p}\right] \neq \textbf{k} \right\} }{|\mathcal{P}|} - |A_L(\textbf{x})| \right|} }$$
Note that interchanging the sum with the supremum, as we did, strictly increases the quantity. In the case of $|L| = 1$, the geometric structure of the decomposition ensured that interchanging
those quantities has no effect; here, these quantities are actually intertwined in a complicated way and we lose in the process of interchanging; however, since this is not the main term, the loss
is acceptable. The sum contains $\binom{d}{\ell}$ terms and we treat all of them independently.
Every single set contains at most 
$$(N^{\frac{1}{d}} - 1)^{d-\ell} \quad\mbox{points}.$$
As before, we can now project every set
$$ \left\{\textbf{p} \in A_L(\textbf{x}): \textbf{p} \leq \textbf{x} \wedge \left[\textbf{p}\right] \neq \textbf{k}\right\} \rightarrow [0,N^{-\frac{1}{d}}]^{\ell}.$$
By employing the full strength of the inequality of Heinrich, Novak, Wasilkowski \& Wozniakowski, we get for $N$ sufficiently large
\begin{align*}
\mathbb{P} \left( \sup_{\left[\textbf{x}\right] = \textbf{k}}{ \left|  \frac{\# \left\{\textbf{p} \in A_L(\textbf{x}): \textbf{p} \leq \textbf{x} \wedge \left[\textbf{p}\right] \neq \textbf{k} \right\} }{|\mathcal{P}|} - |A_L(\textbf{x})| \right|} > \frac{1}{\binom{d}{\ell}d} \right. & \left. \frac{1}{N^{\frac{1}{2} + \frac{1}{2d}}}\right) \\
&\leq (2 + o(1))\exp\left(-c_{d,\ell} N^{\frac{\ell-1}{d}}\right)
\end{align*}
for some constant $c_{d,\ell} > 0$ that could be explicitly computed.
Since $\ell \geq 2$, this probability decays faster than any polynomial in $N$. Hence, for $N$ sufficiently large, the
probability of this event for any $\ell$ and any of the $N$ cubes goes to 0. Therefore, we can bound
$$  \sum_{\ell} \sup_{\left[\textbf{x}\right] = \textbf{k}}{\sum_{L \in \left\{0,1\right\}^d \atop |L| = \ell}{\left|  \frac{\# \left\{\textbf{p} \in A_L(\textbf{x}): \textbf{p} \leq \textbf{x} \wedge \left[\textbf{p}\right] \neq \textbf{k} \right\} }{|\mathcal{P}|} - |A_L(\textbf{x})| \right|} } \leq \frac{1}{N^{\frac{1}{2} + \frac{1}{2d}}}$$
for all $N$ cubes with a likelihood converging to 1 faster than $1-N^{-c}$ for every fixed $c > 0$. This implies the result.
\end{proof}

%%%%%%%%%%%%%%%%%%%%%%%%%%%%%%%%%%%%%%%%%%%%%%%%%%
%%%%%%%%%%%%%%%%%%%%%%%%%%%%%%%%%%%%%%%%%%%%%%%%%%
%%%%%%%%%%%%%%%%%%%%%%%%%%%%%%%%%%%%%%%%%%%%%%%%%%

\section{Proof of the Statement: lower bound}
Our derivation of the lower bound comes from considering again only the cube with coordinates $(m,m,\dots,m)$. We have a relatively good understanding of the underlying processes: as $N$ becomes large, the main contribution comes from the $d$ slices containing a large proportion of the points whereas
all other $2^d-d-1$ slices  have strictly smaller proportion of points that decreases as their codimension increases. Furthermore, the $d$ major
slices are independent. This motivates the structure of our argument: we compute the average discrepancy contribution of one slice in isolation, show that we can pick a point
attaining that lower bound simultaneously for all $d$ slices, show that at least $d/2$ have the same sign (which is trivial) and then show that with very high likelihood the quantities contributed by the remaining sets in the partition
are an entire order of magnitude smaller.

\begin{proof} We start by analyzing one major slice. 
%Structurally, for $|L| = 1$, the quantity
%$$ \left|  \frac{\# \left\{\textbf{p} \in A_L(\textbf{x}): \textbf{p} \leq \textbf{x} \wedge \left[\textbf{p}\right] \neq \left[\textbf{k}\right] \right\} }{|\mathcal{P}|} - |A_L(\textbf{x})|  \right|$$
%can be compared to the one-dimensional discrepancy
%$$ X_n =\sup_{0 \leq z \leq 1}{\left| \frac{\# \left\{1 \leq \ell \leq n: 0 \leq z_{\ell} \leq z\right\}}{n} -z \right|}$$
%with the number of points being $n = (N^{\frac{1}{d}}-1)^{d-1}$ and then rescaled by a factor of $N^{1/d}$. Altogether, if $|L| = 1$,
%$$ \left|  \frac{\# \left\{\textbf{p} \in A_L(\textbf{x}): \textbf{p} \leq \textbf{x} \wedge \left[\textbf{p}\right] \neq \left[\textbf{k}\right] \right\} }{|\mathcal{P}|} - |A_L(\textbf{x})|  \right| =
%\frac{X_{(N^{\frac{1}{d}}-1)^{d-1}}}{N^{\frac{1}{d}}}.$$
Setting 
$$ X_n =\sup_{0 \leq z \leq 1}{\left| \frac{\# \left\{1 \leq \ell \leq n: 0 \leq z_{\ell} \leq z\right\}}{n} -z \right|}$$
for the one-dimensional discrepancy, we recall that we have
$$ \left|  \frac{\# \left\{\textbf{p} \in A_L(\textbf{x}): \textbf{p} \leq \textbf{x} \wedge \left[\textbf{p}\right] \neq \textbf{k} \right\} }{|\mathcal{P}|} - |A_L(\textbf{x})|  \right| = 
\frac{X_n}{ N^{\frac{1}{d}}} $$
%X_{ \left( N^{\frac{1}{d}}-1 \right)^{d-1} },$$
for $|L| = 1$ and $n=(N^{1/d}-1)^{d-1}$.

The quantity $X_n$, however, is rather well understood: it is actually known to converge in distribution to the Kolmogorov distribution and thus
\begin{align*}
 \lim_{n \rightarrow \infty}{\sqrt{n} ~\mathbb{E} X_n}  &= \int_{0}^{\infty}{x \frac{d}{dx} \left(-2\sum_{k=1}^{\infty}{(-1)^{k-1}e^{-2 k^2 x^2}}\right) dx} \\
&= \sum_{k=1}^{\infty}{(-1)^{k-1} \int_{\mathbb{R}}{e^{-2k^2 x^2}dx}} \\
&= \sqrt{\frac{\pi}{2}} \sum_{k=1}^{\infty}{ \frac{(-1)^{k-1}}{k}}\\
&= \sqrt{\frac{\pi}{2}} \log{2} \sim 0.86873\dots
\end{align*}
Note, furthermore, that
$ (N^{\frac{1}{d}} - 1)^{d-1} \leq N^{\frac{d-1}{d}}$.
Therefore, the expected contribution of a single slice is
$$ \mathbb{E} \frac{X_{(N^{\frac{1}{d}}-1)^{d-1}}}{N^{\frac{1}{d}}} \geq \frac{c}{N^{\frac{d-1}{2d}}}\frac{1}{N^{\frac{1}{d}}} = \frac{c}{N^{\frac{1}{2} + \frac{1}{2d}}},$$
for every $c < \sqrt{\pi/2}\log{2}$ and every $N$ sufficiently large depending on $c$. Note that this is for the \textit{absolute value} of one
discrepancy function. However, we do not know which sign will actually arise (whether there are too many or too few points). Among $d$ slices we expect $d/2$ slices
to have too many points and $d/2$ to have too few points; since we can actually set entire slices to zero by setting the coordinate of the corresponding slice to be $(N^{\frac{1}{d}}-1)/N$ (the lower
corner of the cube), we can therefore guarantee a total expectation of
$$ \mathbb{E} D_N^*(\mathcal{P}) \geq \frac{d}{2} \frac{c}{N^{\frac{d+1}{2d}}}.$$
\end{proof}

\section{Remarks and Comments}

\subsection{A change of regime.} First, we explain why it is natural to expect a change of behavior in the discrepancy of fully random points and the discrepancy of jittered sampling at $N \sim d^d$. The first heuristic
is as follows: the construction allows us to discard all points in the 'big box' and merely consider the slices instead. The fraction of points in the big box is
$$ \frac{(N^{\frac{1}{d}} - 1)^d}{N} = \left(1-\frac{1}{N^{\frac{1}{d}}}\right)^d \sim \begin{cases} 1 \qquad &\mbox{if}~N \gg d^d \\
0 \qquad &\mbox{if}~N \ll d^d\end{cases}.$$
Note that for $N = d^d$, the fraction converges to $1/e$. Put differently, for $N = d^d$, there is a constant fraction of the points in the big box but we still have to consider a constant fraction
of $\sim 1-e^{-1}$ points.
Let us now refine the heuristic somewhat: in our case, the primary contribution comes from $d$ random variables of size $\sim 1/N^{1/2 + 1/(2d)}$ whereas $N$ independently and identically
distributed random variables give rise to a discrepancy of $\sqrt{d/N}$. Altogether, we have to compare
$$ \sqrt{\frac{d}{N}} \qquad \mbox{with} \qquad \frac{d}{N^{\frac{1}{2} + \frac{1}{2d}}} =  \sqrt{\frac{d}{N}} \frac{\sqrt{d}}{N^{\frac{1}{2d}}},$$
where the last factor is $= 1$ for $N = d^d$.

\subsection{Explicit point sets in the large regime.} Assuming our upper bound to hold unconditionally, we improve on fully random points as soon as $N \gtrsim d^d$. The
purpose of this section is a short heuristic suggesting that deterministic constructions with the same number $N$ of points are not (yet provably) better than point sets obtained from jittered sampling. 
We compare our results with the leading term in the best known bound on the discrepancy of Hammersley point sets derived from recent results of Atanassov \cite{ata}. 
Following \cite[Theorem 3.46]{dickpill}, the star discrepancy of the Hammersley point set $\mathcal{H}_N$ consisting of $N$ points in $[0,1]^d$ is bounded by
\begin{equation*}
D_N^*(\mathcal{H}_N) < \frac{7}{2^{d-1}(d-1)} \frac{(\log N)^{d-1}}{N} + \mathcal{O} \left( \frac{(\log N)^{d-2}}{N} \right),
\end{equation*}
if $\mathcal{H}_N$ is generated from the first $d-1$ prime numbers. (Replacing the first $d-1$ prime numbers by a set of larger coprime integers increases the constant.)
%This is unjustified on several levels but is useful as a heuristic: the 
%Atanassov bound is only valid as $N \rightarrow \infty$ but it is also well-known that the second order term comes with a very large constant that would still dominate at
%$N \sim d^d$. The leading term is
%$$ D_N^*(H_N) \lesssim \frac{1}{d!}\prod_{j=1}^{d}{\frac{b_j-1}{2\log{b_j}}} \frac{(\log{N})^d}{N},$$
%where $b_1, b_2, \dots, b_d$ are coprime bases. The constant is minimized if we take the first $d$ prime numbers $p_1, \dots, p_d$. Asymptotically,
%the prime number theorem implies $p_k \sim k \log{k}$ and thus
%$$ \frac{1}{d!}\prod_{j=1}^{d}{\frac{p_j-1}{2\log{p_j}}} \sim  \prod_{j=1}^{d}{\frac{j \log{j}}{j 2\log{(j\log{j})}}} \qquad \mbox{which decays asymptotically as}~2^{-d}.$$
Altogether, for $N = (2d)^{2d}$ points, we thus get a bound of roughly
$$ \sim \frac{1}{ 2^{d-1}} \frac{(\log(2d))^{d-1}}{(2d)^{d+1}} \qquad \mbox{whereas our bound yields} \qquad \frac{d \sqrt{2} \sqrt{\log{2d}}}{(2d)^{d+1} } .$$
%Assuming these heuristics to somehow capture the truth (which is probably unreasonable to begin with), 
This suggests that the known discrepancy bound for the Hammersley point set is a superexponential factor $1/d \,  (\log{(2d)}/2)^{d-3/2}$ bigger than our bound.

\subsection{Some numerical results.} The following conjecture seems exceedingly natural: the expected star-discrepancy of $N = m^d$ independently and uniformly 
distributed points in $[0,1]^d$ should always be bigger than the expected star-discrepancy of $N = m^d$ points obtained
from jittered sampling. This conjecture is supported by the results of our numerical experiments shown in Table 1. For the computation of the star discrepancy we used a recent implementation of the Dobkin-Eppstein-Mitchell algorithm \cite{dob} by Magnus Wahlstr\"{o}m, which is freely available online \cite{wahl} and which computes the star discrepancy exactly; for details on the implementation we refer to \cite{doerr}.

\begin{table}[h!] 
\begin{tabular}{ | c | c c c | c c | c c| }
\hline
 					&  			& $d=2$ 		& 							& $d=3$ &  & $d=5$&\\
\hline
					&  $m=5$ 		& $m=10$ 		& $m=20$ 		& $m=5$& $m=10$ & $m=3$& $m=5$\\
\hline 
Jittered Sampling	&  	$0.1518$	&  	$0.0629$	& 	$0.0243$	&	$0.0932$	&	$0.0279$	& $0.1046$&  $ 0.0259$\\
Random Sets		&  	$0.2180$	&  	$0.1232$	& 	$0.0624$	& 	$0.1318$	&	$0.0542$	& $0.1200$& $0.0331$\\
\hline
\end{tabular} 
\\[6pt]
\caption{Mean discrepancy of 10 experiments with $N=m^d$ points in $[0,1]^d$.}
\end{table}

\subsection{The lower bound should not be optimal.} The main heuristic for why the lower bound should not be optimal
lies in its derivation: we computed the expected discrepancy that we could expect by restricting
ourselves to rectangles $[0, \textbf{x}]$ with $x$ being in the cube indexed by $(m,m,\dots,m)$. However, we could
very well repeat the same computation with another cube on the diagonal, say, the cube $(m-1, m-1, \dots, m-1)$.
The construction of jittered sampling implies that the discrepancy that can be attained for each single one of these
cubes is an independent random variable. Moreover, the random variable computed for a single cube had an
expectation of
$$\mathbb{E} (\mbox{contribution of a single cube}) \sim \frac{d}{N^{\frac{1}{2} + \frac{1}{2d}}}.$$
\begin{center}
\begin{figure}[h!]
\centering
\begin{tikzpicture}[scale=1]
\fill[gray!20!white] (12.67,0) -- (12.67,4) -- (12,4)--(12,0);
\fill[gray!20!white] (8,4) -- (12,4) -- (12,4.19)--(8,4.19);
\draw[dashed,thick] (12.67,0) -- (12.67,4.19) -- (8,4.19);

\fill[gray!20!white] (11.3,0) -- (11.3,3) -- (11,3)--(11,0);
\fill[gray!20!white] (8,3.5) -- (11,3.5) -- (11,3)--(8,3);
\draw[dashed,thick] (11.3,0) -- (11.3,3.5) -- (8,3.5);

\fill[gray!20!white] (10.3,0) -- (10.3,2) -- (10,2)--(10,0);
\fill[gray!20!white] (8,2.5) -- (10,2.5) -- (10,2)--(8,2);
\draw[dashed,thick] (10.3,0) -- (10.3,2.5) -- (8,2.5);

\draw[step=1cm,gray,very thin] (8,0) grid (13,5);

\node at (8.31,0.23) {$\bullet$}; 
\node at (8.7,1.5) {$\bullet$}; 
\node at (8.2,2.7) {$\bullet$}; 
\node at (8.3,3.34) {$\bullet$}; 
\node at (8.9,4.29) {$\bullet$}; 
\node at (9.3,0.74) {$\bullet$}; 
\node at (9.55,1.56) {$\bullet$}; 
\node at (9.1,2.83) {$\bullet$}; 
\node at (9.65,3.42) {$\bullet$}; 
\node at (9.45,4.43) {$\bullet$}; 
\node at (10.34,0.41) {$\bullet$}; 
\node at (10.67,1.66) {$\bullet$}; 
\node at (10.52,2.42) {$\bullet$}; 
\node at (10.67,3.37) {$\bullet$}; 
\node at (10.52,4.12) {$\bullet$}; 
\node at (11.5,0.82) {$\bullet$}; 
\node at (11.7,1.58) {$\bullet$}; 
\node at (11.3,2.21) {$\bullet$}; 
\node at (11.9,3.14) {$\bullet$}; 
\node at (11.7,4.72) {$\bullet$}; 
\node at (12.23,0.63) {$\bullet$}; 
\node at (12.56,1.61) {$\bullet$}; 
\node at (12.89,2.25) {$\bullet$}; 
\node at (12.17,3.57) {$\bullet$}; 
\node at (12.43,4.89) {$\bullet$}; 
\end{tikzpicture}
\caption{Computing discrepancy w.r.t. many different cubes on the diagonal.}
\end{figure}
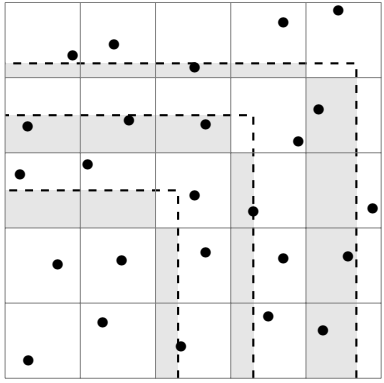
\end{center}
We recall that this discrepancy arose (in leading order) from the contribution of $d$ slices and the Dvoretzky-Kiefer-Wolfowitz inequality
suggests that each slice contributes a random variable that decays essentially like a Gaussian. This suggests the following heuristic
for the random variable
$$ \mbox{contribution of a single cube} \sim \frac{d+\mathcal{N}(0,\sqrt{d})}{N^{\frac{1}{2} + \frac{1}{2d}}} = \frac{d+\sqrt{d}\mathcal{N}(0,1)}{N^{\frac{1}{2} + \frac{1}{2d}}}$$
We recall the Fisher-Tippett-Gnedenko theorem (see e.g. \cite{gned}) which implies that for
$X_1, X_2, \dots, X_n$ independent $\mathcal{N}(0,1)-$distributed random variables, we have that
$$ \mathbb{E} \max(X_1, \dots, X_n) \sim \sqrt{\log{n}},$$
where $\sim$ hides some absolute constants. There are $N^{\frac{1}{d}}$ cubes on the diagonal, which suggests
$$ \sup_{\mbox{cube}~Q~\mbox{on the diagonal}} \mathbb{E} (\mbox{contribution of} ~Q) \sim \frac{d + \sqrt{d}\sqrt{\log{N^{\frac{1}{d}}}}}{N^{\frac{1}{2} + \frac{1}{2d}}} = \frac{d + (\log{N})^{\frac{1}{2}}}{N^{\frac{1}{2} + \frac{1}{2d}}}.$$
This heuristic might actually be very close to the truth. Note that, as we move down the diagonal, the number of random points influencing the discrepancy is actually decreasing
and large deviations become increasingly unlikely. This decrease is slight if $N$ is big but not so slight for small $N$, which is why all of this should be a gross overestimation
for $N \lesssim d^d$. The difficulty
in making this reasoning precise is that one seems to require an inverse Dvoretzky-Kiefer-Wolfowitz inequality, which guarantees that the likelihood
of single slices contributing large values is comparable with the Gaussian bound from above; it seems possible that a suitable application of the Komlos-Major-Tusnady approximation \cite{kmt, kmt1} could be helpful in making further progress in that direction.

\section{Proof of the Theorem 1.2.}
\subsection{Main statement.} 
\begin{proof}
 We fix $N \in \mathbb{N}$ and a partition
$$ [0,1]^d = \bigcup_{i=1}^{N}{\Omega_i} \qquad \mbox{such that} \quad \forall~1 \leq i \leq N: |\Omega_i| = \frac{1}{N}$$
throughout the proof. To avoid unnecessary formalism, we will use $\mathcal{P}_{\Omega}$ for a jittered set generated w.r.t. $\Omega$ and use $\mathcal{P}_N$ to denote a set of $N$ fully random points and we denote integration in the respective probability spaces by
 $\int_{\mathcal{P}_{\Omega}}$ and $\int_{\mathcal{P}_{N}}$, respectively. The proof proceeds by interchanging the order of integration (this was previously useful in a very similar context in \cite{sst}) and applying a convexity argument. We start with
\begin{align*}
\mathbb{E} \mathcal{L}^2_2(\mathcal{P}_{\Omega}) &=  \int_{\mathcal{P}_{\Omega}} \int_{[0,1]^d}{ \left| \frac{ \#\mathcal{P}_{\Omega} \cap [0, \textbf{x}]}{N} -  \left|  [0,\textbf{x}]\right|   \right|^2 d\textbf{x} d\omega }\\
&=   \int_{[0,1]^d} \int_{\mathcal{P}_{\Omega}}{ \left| \frac{ \#\mathcal{P}_{\Omega} \cap [0, \textbf{x}]}{N} -  \left|  [0,\textbf{x}]\right|   \right|^2  d\omega d\textbf{x}}.
\end{align*}
We will now fix $\textbf{x}$. By construction, we have
$$ \mathbb{E}  \frac{ \#\mathcal{P}_{\Omega} \cap [0, \textbf{x}]}{N} = \int_{\mathcal{P}_{\Omega}}{ \frac{ \#\mathcal{P}_{\Omega} \cap [0, \textbf{x}]}{N} d\omega} = \frac{1}{N}\sum_{i=1}^{N}{\frac{\left| \Omega_i \cap [0,\textbf{x}]\right|}{|\Omega_i|} } =  \sum_{i=1}^{N}{\left| \Omega_i \cap [0,\textbf{x}] \right|}  = \left|  [0,\textbf{x}] \right|.$$
Therefore, we have that 
$$  \int_{\mathcal{P}_{\Omega}}{ \left| \frac{ \#\mathcal{P}_{\Omega} \cap [0, \textbf{x}]}{N} -  \left|  [0,\textbf{x}]\right|   \right|^2 d\omega} = \mbox{var}\left(  \frac{ \#\mathcal{P}_{\Omega} \cap [0, \textbf{x}]}{N} \right) = \frac{1}{N^2}  \mbox{var}\left(   \#\mathcal{P}_{\Omega} \cap [0, \textbf{x}] \right) .$$
However, there is another way to understand that random variable: using $\mathcal{B}(n,p)$ to denote a Bernoulli random variable with success probability $p$, we have, by construction, a sum of independent (but \textit{not} identical) random variables
$$   \#\mathcal{P}_{\Omega} \cap [0, \textbf{x}]= \mathcal{B}\left(1,\frac{|\Omega_1 \cap [0,\textbf{x}]|}{|\Omega_1|}\right) +  \mathcal{B}\left(1,\frac{|\Omega_2 \cap [0,\textbf{x}]|}{|\Omega_2|}\right) + \dots +  \mathcal{B}\left(1,\frac{|\Omega_N \cap [0,\textbf{x}]|}{|\Omega_N|}\right) .$$
If $X,Y$ are independent random variables, then $\mbox{var}(X+Y) = \mbox{var}(X) + \mbox{var}(Y)$ and thus, using $|\Omega_i| = 1/N$
\begin{align*}
 \mbox{var} \left( \#\mathcal{P}_{\Omega} \cap [0, \textbf{x}] \right) &= \sum_{i=1}^{N}{ \frac{|\Omega_i \cap [0,\textbf{x}]|}{|\Omega_i|}   \left(1 - \frac{|\Omega_i \cap [0,\textbf{x}]|}{|\Omega_i|} \right)  }\\
 &= N |[0,\textbf{x}]| -  N^2\sum_{i=1}^{N}{  \left( |\Omega_i \cap [0,\textbf{x}]|  \right)^2 }. 
 \end{align*}
Cauchy-Schwarz implies that
$$  |[0,\textbf{x}]| ^2 = \left(\sum_{i=1}^{N}{  \left( |\Omega_i \cap [0,\textbf{x}]|  \right) }\right)^2 \leq N\sum_{i=1}^{N}{  \left( |\Omega_i \cap [0,\textbf{x}]|  \right)^2 }.$$
This implies
\begin{align*}
\mbox{var} \left( \#\mathcal{P}_{\Omega} \cap [0, \textbf{x}] \right)  &= N |[0,\textbf{x}]| -  N^2\sum_{i=1}^{N}{  \left( |\Omega_i \cap [0,\textbf{x}]|  \right)^2 } \\
&\leq N |[0,\textbf{x}]| -  N|[0,\textbf{x}]|^2.
\end{align*}
It remains to compute  $\mbox{var} \left( \#\mathcal{P}_{N} \cap [0, \textbf{x}] \right)$, which is easy. Let $X$ be a Bernoulli random variable with
$$ \mathbb{P}(X = 1) = |[0, \textbf{x}]| \qquad \mbox{and} \qquad  \mathbb{P}(X = 1) =1 -  |[0, \textbf{x}]|$$
and let $X_1, \dots, X_N$ be $N$ independent copies. Then 
\begin{align*}\mbox{var}\left( \#\mathcal{P}_{N} \cap [0, \textbf{x}] \right) &= \mbox{var}\left( X_1 + \dots + X_N \right) \\
&=  \mbox{var}\left( X_1 \right) + \dots +  \mbox{var}\left( X_N \right) \\
&= N  \mbox{var}\left( X \right)\\
&= N  |[0, \textbf{x}]| \left(1 -  |[0, \textbf{x}]| \right).
\end{align*}
Therefore, for every $\textbf{x} \in [0,1]^d$,
$$\mbox{var} \left( \#\mathcal{P}_{\Omega} \cap [0, \textbf{x}] \right) \leq \mbox{var} \left( \#\mathcal{P}_{N} \cap [0, \textbf{x}] \right)$$
and since $\textbf{x}$ was arbitrary, we get the desired result by integration over $[0,1]^d$.
\end{proof}

%%%%%%%%%%%%%%%%%%%%%%%%%%%%%%%%%%%%%%%%%%%%%%%%%%%%%%%%%%%%%%%%%
%%%%%%%%%%%%%%		Bibliography
%%%%%%%%%%%%%%%%%%%%%%%%%%%%%%%%%%%%%%%%%%%%%%%%%%%%%%%%%%%%%%%%%

\end{document}